\documentclass[11pt,a4paper,reqno]{amsart}

\usepackage[utf8]{inputenc}
\usepackage[T1]{fontenc}
\usepackage{xspace,
amsthm,
amsmath,
amssymb,
bbm,
setspace,
graphicx,
subfig,
xcolor,
keyval}
\usepackage{mathtools}

\usepackage{geometry}
\geometry{hmargin={2.3cm,2.3cm}}
\geometry{vmargin={5cm,3cm}}


\newcommand{\R}{\mathbb{R}}
\newcommand{\Rd}{\mathbb{R}^2}
\newcommand{\Rp}{\mathbb{R}^+}
\newcommand{\C}{\mathbb{C}}
\newcommand{\Id}{\mathbb{I}}

\newcommand{\G}{\mathcal{G}}

\newcommand{\F}{\mathcal{F}}
\renewcommand{\exp}{\mathrm{e}}
\renewcommand{\i}{\imath}
\newcommand{\M}{\mathrm{M}}
\newcommand{\E}{\mathrm{E}}
\renewcommand{\S}{\mathrm{S}}

\renewcommand{\Re}{\mathrm{Re}}
\renewcommand{\leq}{\leqslant}
\renewcommand{\geq}{\geqslant}

\newcommand{\la}{\lambda}
\newcommand{\La}{\Lambda}
\newcommand{\si}{\sigma}
\newcommand{\be}{\beta}
\newcommand{\al}{\alpha}
\renewcommand{\th}{\theta}
\newcommand{\om}{\omega}
\newcommand{\ga}{\gamma}

\newcommand{\x}{\mathbf{x}}

\renewcommand{\k}{\mathbf{k}}
\newcommand{\z}{\mathbf{0}}

\newcommand{\de}{\partial}
\newcommand{\lap}{\Delta}
\newcommand{\na}{\nabla}

\newcommand{\f}[2]{\frac{#1}{#2}}

\newcommand{\scal}[2]{\langle #1,#2\rangle}
\newcommand{\wt}[1]{\widetilde{#1}}
\newcommand{\ov}[1]{\overline{#1}}

\newcommand{\disp}{\displaystyle}


\theoremstyle{plain} 
\newtheorem{thm}{Theorem}[section]

\theoremstyle{definition}

\theoremstyle{definition} 
\newtheorem{defn}{Definition}[section]

\theoremstyle{remark} 
\newtheorem{rem}{Remark}[section]


\onehalfspacing

\begin{document}
 
\title[Stability of the standing waves of the concentrated NLSE in 2D]{Stability of the standing waves of the \\concentrated NLSE in dimension two}

\author[R. Adami]{Riccardo Adami}
\address{Politecnico di Torino, Dipartimento di Scienze Matematiche ``G.L. Lagrange'', Corso Duca degli Abruzzi, 24, 10129, Torino, Italy.}
\email{riccardo.adami@polito.it}
\author[R. Carlone]{Raffaele Carlone}
\address{Universit\`{a} degli Studi di Napoli ``Federico II'', Dipartimento di Matematica e Applicazioni ``R. Caccioppoli'', MSA, via Cinthia, I-80126, Napoli, Italy.}
\email{raffaele.carlone@unina.it}
\author[M. Correggi]{Michele Correggi}
\address{Scuola Normale Superiore, Piazza dei Cavalieri, 7, 56126, Pisa}
\urladdr{https://sites.google.com/sns.it/michele-correggi}
\email{michele.correggi@gmail.com}
\author[L. Tentarelli]{Lorenzo Tentarelli}
\address{Politecnico di Torino, Dipartimento di Scienze Matematiche ``G.L. Lagrange'', Corso Duca degli Abruzzi, 24, 10129, Torino, Italy.}
\email{lorenzo.tentarelli@polito.it}

\date{\today}

\begin{abstract} In this paper we will continue  the analysis of two dimensional  Schr\"odinger equation  with a fixed, pointwise, nonlinearity started in \cite{ACCT-18,CCT-ANIHPC19}. In this model,  the occurrence of a blow-up phenomenon has two peculiar features:  the energy threshold under which all solutions blow up is strictly negative and coincides with the infimum of the energy of the standing waves; there is no critical power nonlinearity, i.e., for every power there exist blow-up solutions. Here we study the stability properties of stationary states to verify whether  the anomalies  mentioned before have any counterpart on the stability features.

\end{abstract}
\maketitle

\vspace{-.5cm}
{\footnotesize AMS Subject Classification: 35Q55, 35Q40.
}
\smallskip

{\footnotesize Keywords:  nonlinear Schr\"{o}dinger equation; point interactions; standing waves; orbital stability.
}


\section{Introduction}

The Nonlinear Schr\"odinger Equation (NLSE) with concentrated nonlinearity in $d=2$ is the subject of several recent papers, finalizing a research program  developed over the last twenty years (see \cite{AT-JFA01,ADFT-ANIHPC03,ADFT-ANIHPC04,CFN-JMAA17} for the NLSE with concentrated nonlinearity and also \cite{CFT-Non19} and \cite{CCNP-SIMA17} for the fractional case and the Dirac equation, respectively). 

Such a research line was originally  motivated by some mesoscopic physical models. For instance,   in semiconductor theory  the effect of electronic charge accumulation in a resonant tunneling in a double barrier heterostructure \cite{JLPS-APHY95} is typically  studied using a concentrated NLSE.
More recently, other applications have been suggested:  the spontaneous formation of quantum coherent non-dissipative patterns in semiconductor heterostructures with nonlinear properties \cite{BKB-PRB96}; the dynamics of the mixed states of statistical physics \cite{N-Non98}; the appearance of quantum turbulence in the probability density \cite{AZ-99}; the scattering  in nuclear physics models for the disexcitation of isomeric states and also the production of weakly bounded states in heavy nuclei close to the instability; the analysis of resonant tunneling diodes, which exhibits intrinsic instability \cite{WCGBZ-03} or  the fabrication of semiconductor superlattices, for the estimate of  the time decay rates for the solutions to the Schr\"odinger-Poisson equations in the repulsive case \cite{BNR-06,SASO-04}.

In \cite{CCT-ANIHPC19} and \cite{CFT-JFA17} the local well-posedness is established, i.e., the problem of existence and uniqueness of the solution for short times, as well as the  mass  and energy conservation. Global existence is also proven in the defocusing case irrespective of the power of the nonlinearity. In \cite{ACCT-18} it is studied the occurrence of a blow-up phenomenon for a focusing nonlinearity, with two peculiar features: first, the energy threshold under which all solutions blow up is strictly negative and coincides with the infimum of the energy of standing waves; second, there is no critical power nonlinearity, i.e.,  for every power there exist blow up solutions. We remark that such a behavior is anomalous compared to the conventional NLSE, also because such anomalies are not a direct consequence of  the dimension, or of the concentrated nonlinearity. In fact, there is a critical power for standard nonlinearities in dimension two \cite{MR-AnMath05}, and there is also a critical power for concentrated nonlinearities in dimension one and three \cite{ADFT-ANIHPC03,AT-JFA01}. In the present paper we  investigate further whether  such peculiarities also show up in the stability of stationary states. 

Let us preliminary recall  the results on  the standard NLSE \cite{SUSU-99}: consider the Cauchy problem for a focusing NLSE, where the word focusing refers to the attractive character of the nonlinearity, with initial data in the energy space
\[
\imath\partial_{t}\psi(t,x)+\triangle\psi+|\psi|^{2\sigma}\psi=0,\qquad \psi(0,x)=\psi_{0}(x)\in\,H^{1}(\mathbb{R}^{d}).
\]
In \cite{CL-CMP82} , using a variational characterization, it was established the orbital stability of the ground-states in the subcritical case, i.e., for  $\sigma<2\backslash d$. For the general case, using  results contained in \cite{SS-CMP85,GSS-87,W-CMP86}
 it is possible to generalize the result on the stability of the ground state solitary waves, extending  the Vakhitov-Kolokolov criterion from spectral stability to the orbital stability. The result provides an alternative proof of  orbital stability for the subcritical solitary waves and shows the orbital instability in the critical and supercritical case ($\sigma d\geqslant 2$).

It turns out that there is a strict relation between blow-up and orbital stability of standing waves \cite{St-PD91}. The NLSE admits blow-up  solutions if and only if its solitary waves are orbitally unstable. This behavior has some relevant exceptions as in the case of NLSE in bounded domains or in \cite{SF-NON08} where the key feature of all this models is always the absence of translational invariance in space.

The analysis of stationary states stability for concentrated nonlinearities traces back to \cite{AN-CMP13,ANO-JMP13,ANO-DCDS16}. For the concentrated NLSE in dimension $2$ the scenario is different and, in some sense, surprising. As it will be illustrated in Section
\ref{focusing} there are, at any fixed value of the mass, two branches of stationary states, distinguished by the value of the frequency $\omega$, with opposite orbital stability behavior. To the best of our knowledge, there is no similar behavior  for a standard Schr\"odinger  equation on $\mathbb{R}^{d}$, but some analogy exists with the 1d NLSE in the presence of a point interaction \cite{FOO-05} and with NLSE on compact domains \cite{NTV-19,PV-17}. In all these cases, the nonlinearity is supercritical.

As for the case of concentrated NLSE in the defocusing case, the scenario is really puzzling. In Section \ref{defocusing} the  analysis of stationary waves reveals that they are stable and moreover that they are ground states.
 

\subsection{Setting and known results}

The problem under investigation can be formally written as
\begin{equation}
 \label{eq-cpform}
 \left\{
 \begin{array}{ll}
  \disp\i\f{\de\psi}{\de t}=\big(-\lap+\be|\psi|^{2\si}\delta_{\mathbf{0}}\big)\psi, & \text{in }\Rp\times\Rd,\\[.4cm]
  \psi(0)=\psi_0, & \text{in }\Rd,
 \end{array}
 \right.
\end{equation}
where $\si>0$, $\beta\in\R$ and $\delta_{\mathbf{0}}$ is a Dirac delta function centered  at the origin of $\Rd$. As extensively explained in \cite{ACCT-18,CCT-ANIHPC19}, the ground state problem in  \eqref{eq-cpform}  can be rigorously formulated in the following weak form:
\begin{equation}
 \label{eq-cpweak}
 \left\{
 \begin{array}{l}
  \disp\i\f{d}{dt}\scal{\chi}{\psi(t)}=\scal{\na\chi_\la}{\na\psi(t)}+\la(\scal{\chi_\la}{\phi_\la(t)}-\scal{\chi}{\psi(t)})+\th\big(|q(t)|\big)q_\chi^*q(t),\quad\forall \chi\in V\\[.4cm]
  \psi(0)=\psi_0,
 \end{array}
 \right.
\end{equation}
where $\scal{\cdot}{\cdot}$ is the usual scalar product  in $L^2(\Rd)$, $\th_\la:\Rp\to\R$ is defined as
\[
 \th_{\lambda}(s):=\f{\log(\sqrt{\la}/2)+\ga}{2\pi}+\be s^{2\si}
\]
(with $\ga$ the Euler-Mascheroni constant) and $V$ is the energy space, i.e.
\begin{equation}
 \label{eq-ensp}
 V:=\left\{\chi\in L^2(\Rd):\chi=\chi_\la+q\G_\la,\,\chi_\la\in H^1(\R^2),\,q\in\C\right\},
\end{equation}
with $\la>0$ and $\G_\la$ denoting the Green's function of $-\lap+\la$ in $\Rd$, i.e., 
\begin{equation}
 \label{eq-green}
 \G_\la(\x):=\f{K_0(\sqrt{\la}\x)}{2\pi}=\f{1}{2\pi}\F^{-1}[(|\k|^2+\la)^{-1}](\x),
\end{equation}
(recall that $K_0$ is the Macdonald function of order zero given, e.g., in \cite{AS-65} and $\F$ is the unitary Fourier transform of $\Rd$). Note also that, the parameter $\la$ does not affect the definition of $V$ nor the Cauchy problem \eqref{eq-cpweak}. Indeed, it is possible to rewrite the space $V$ without the  parameter $\lambda$, as
\begin{equation}\label{vi}
V=\left\{ \chi\in L^{2}(\mathbb{R}^{2}),\, \chi=\chi_{0}-q\frac{\log |x|}{2\pi},\,\chi_{0}\in \dot{H}^{1}(\mathbb{R}^{2}), \,\, q\in \mathbb{C}\right\}
\end{equation}
where $\dot{H}^{1}(\mathbb{R}^{2})$ is the homogeneous Sobolev space. It is important to remark however that the parameter $q$ appearing in the decomposition above does not coincide with the analogous parameter in (\ref{eq-ensp}). Furthermore, (\ref{vi}) is not easily implemented in the expression of the energy, so that  we shall keep using (\ref{eq-ensp}). Coherently with this choice, we shall decompose the solution $\psi(t)$ as 
\begin{equation}\label{decomp}
\psi(t)=\phi_{\lambda}(t)+q(t)\,G_{\lambda},\quad\phi_{\lambda}(t)\in H^{1}(\mathbb{R}), q(t)\in\mathbb{C}\end{equation}
and refer to $\phi_{\lambda}(t)$, $q(t)G_{\lambda }$ and $q(t)$ as to the regular part, the singular part and the charge, respectively.
The decomposition \eqref{decomp} makes sense, as it has been proven in \cite{ACCT-18,CCT-ANIHPC19} that, for $\si\geq1/2$, \eqref{eq-cpweak} is locally well-posed in $V$ (with the additional assumption $\phi_{\lambda}(0)\in H^{1+\eta}$, $\eta>0$) and that the mass
\[
 \M(t)=\M\big(\psi(t)\big):=\|\psi(t)\|^2,
\]
$\|\cdot\|$ denoting the usual norm in $L^2(\Rd)$, and the energy
\begin{multline}
 \label{eq-energy}
 \E(t)=\E\big(\psi(t)\big):=\\
 =\|\na\phi_\la(t)\|^2+\la\big(\|\phi_\la(t)\|^2-\|\psi(t)\|^2\big)+\bigg(\f{\beta|q(t)|^{2\sigma}}{\sigma+1}+\f{\log(\sqrt{\la}/2)+\ga}{2\pi}\bigg)|q(t)|^2,
\end{multline}
which is independent of $\la$ as well, are preserved along the flow. In addition, when $\be>0$, i.e., in the defocusing case, the solution is global in time, whereas when $\be<0$, i.e., in the focusing case, the solution blows up in a finite time. In order to prove these  results, one has to require \cite{ACCT-18} that  $\phi_{\lambda}(0)$  belongs to the Schwartz space, which is only a technical hypothesis, and, more important, its energy satisfies
\[
 \E(\psi_0)<\La=\La(\si,\be):=-\f{\si}{4\pi(\si+1)(-4\pi\si\be)^{1/\si}}.
\]
In the following sections we study the problem of the stability of stationary states separately in the focusing and defocusing case.


\section{Focusing case}\label{focusing}

In  the focusing case, i.e., for  $\be<0$, \eqref{eq-cpweak} admits (see \cite{ACCT-18}) a unique family of standing waves of the form
\begin{equation}
 \label{eq-stan1}
 \psi_\om(t,\x):=\exp^{\i\om t}\,\exp^{\i\eta}\,u_\om(\x),\qquad\eta\in\R,\quad \om\in(\wt{\om},+\infty), \quad  \wt{\om}:=4\exp^{-2\ga},
\end{equation}
where
\begin{equation}
 \label{eq-stan2}
 u_\om(\x):=q(\om)\G_\om(\x),\qquad q(\om):=\bigg(-\f{\log\big(\sqrt{\om}/2\big)+\ga}{2\pi\be}\bigg)^{1/2\si}.
\end{equation}
The behavior of $q(\om)$ is depicted in Figure \ref{fig-val1}(a).

Now, plugging \eqref{eq-stan2} into \eqref{eq-energy}, one finds that the energy of the standing waves as  a function of  the  frequency $\om$  reads
\begin{equation}
 \label{eq-Eomega}
 \E(\omega):=\E(u_\om)=\bigg(\f{\sigma\log\big(\sqrt\omega/2 \big)+\gamma\sigma}{2\pi(\sigma+1)}-\frac{1}{4\pi}\bigg)\bigg(-\f{\log\big(\sqrt{\om}/2\big)+\ga}{2\pi\be}\bigg)^{1/\si},\qquad\forall \om\in(\wt{\om},+\infty).
\end{equation}
The behavior of $E(\om)$ is represented in Figure \ref{fig-val1}(b). In addition,
\begin{equation}
 \label{eq-ommin}
 \min_{\om\in(\wt{\om},+\infty)}\E(\om)=\E(\ov{\om})=\La,\qquad\text{where}\quad \ov{\om}:={4\exp^{-2\ga+1/\si}}.
\end{equation}

\begin{figure}[ht!]
\centering
\subfloat[][Behavior of $q(\om)$.]
{\includegraphics[width=0.40\textwidth]{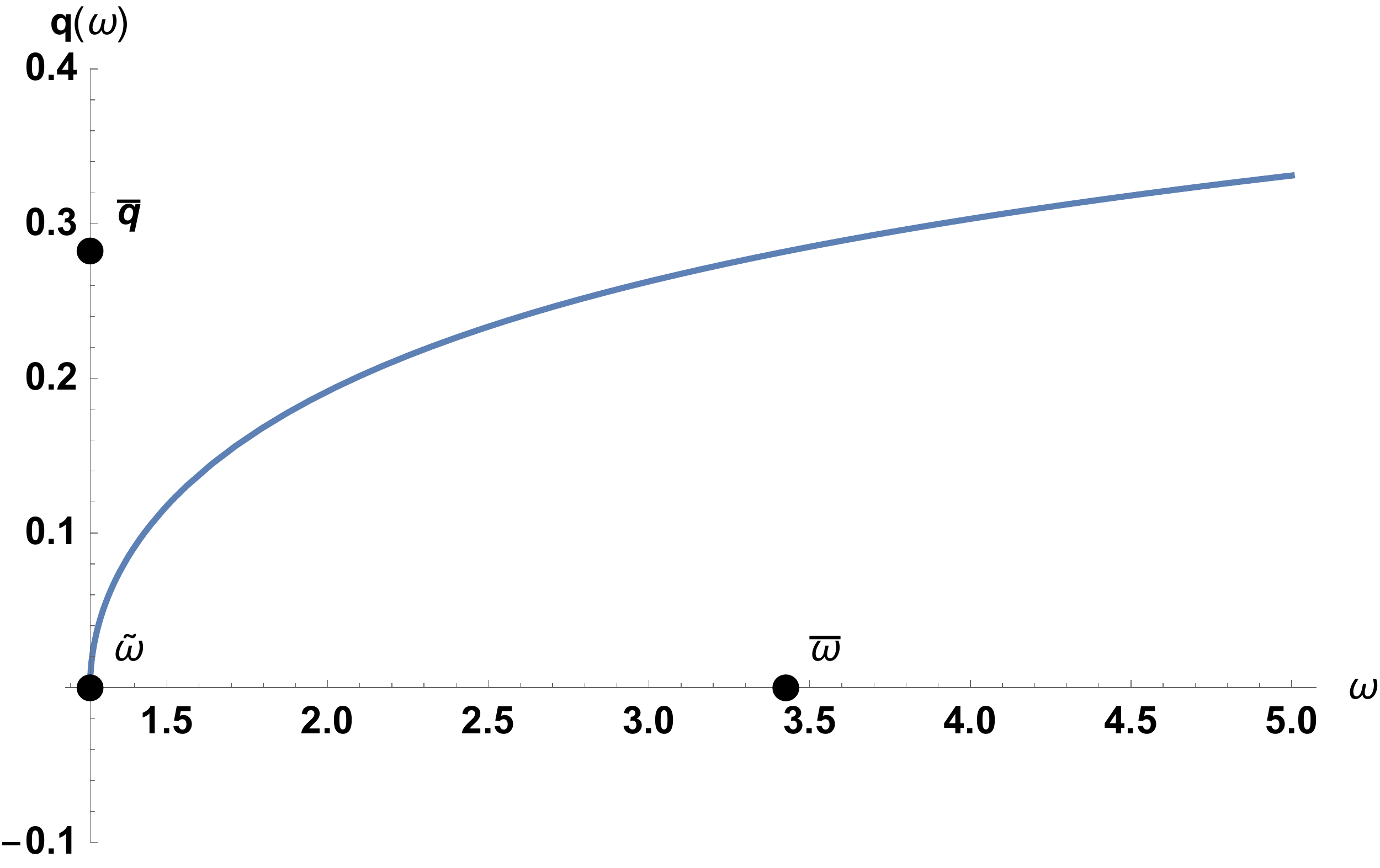}}
\hspace{1cm}
\subfloat[][Behavior of $\E(\om)$.]
{\includegraphics[width=0.40\textwidth]{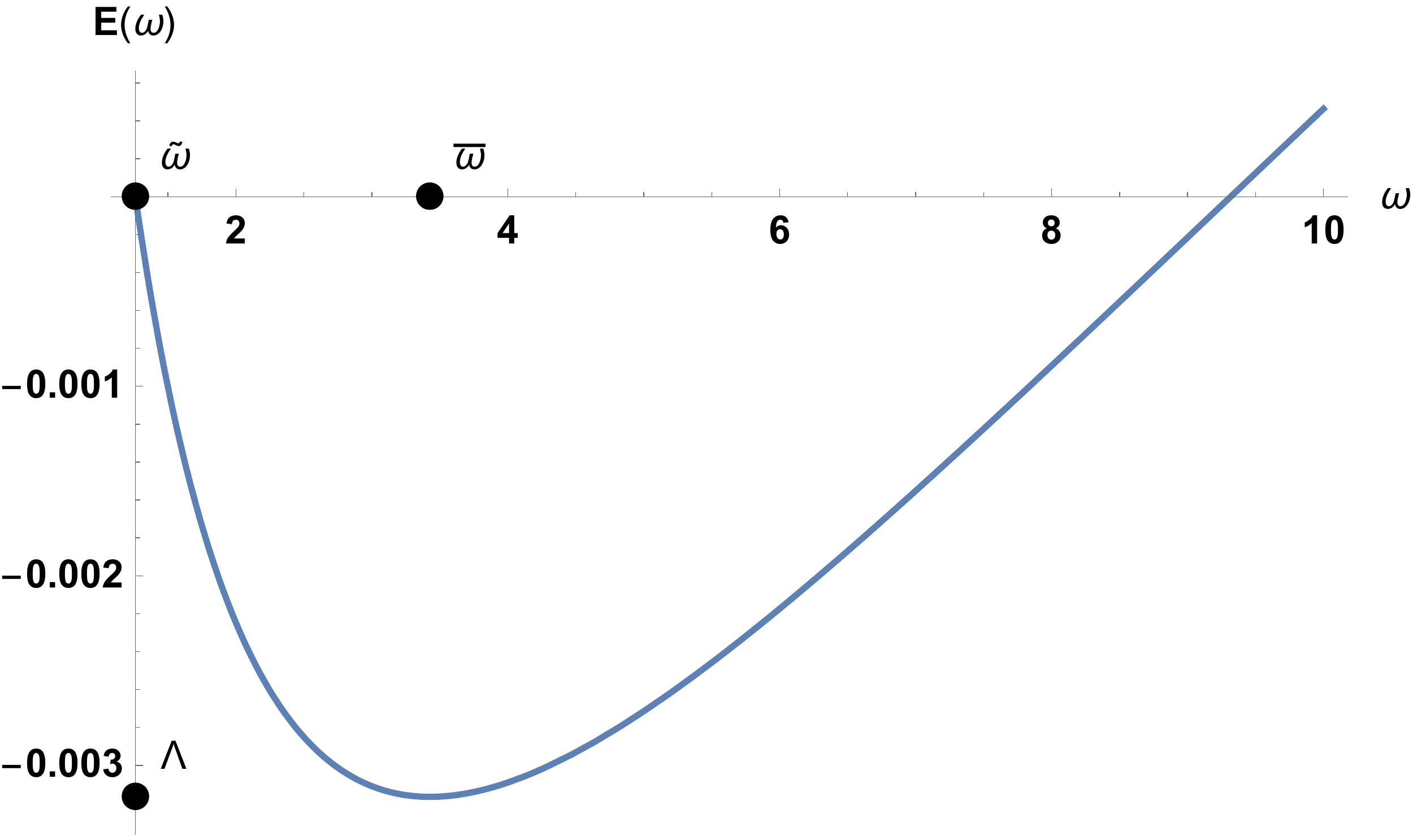}}
\caption{Plots of $q(\om)$ and $\E(\om)$ for $ \omega \in (\wt{\om},+\infty)$,  when $\si=1$ and $\be=-1$. Here $\wt{\om}\approx1.26$, $\ov{\om}\approx3.43$, $\ov{q}\approx0.2$ and $\Lambda\approx-0.0016$.}
\label{fig-val1}
\end{figure}

On the other hand, noting that $q(\wt{\om})=0$ and that $q(\cdot)$ is smooth and strictly increasing on $(\wt{\om},+\infty)$, one can  take the inverse $q(\omega)$ of the function

\begin{equation}
 \label{eq-omdiq}
 \omega(q):=4\,\exp^{-2\ga-4\pi\be q^{2\si}},\qquad q>0,
\end{equation}
and plug it into \eqref{eq-Eomega}, to obtain the energy as a function of $q$, i.e.,
\begin{equation}
 \label{eq-Eq}
 \E(q)={-\f{q^2}{4\pi}-\f{\si\be q^{2\si+2}}{\si+1}}.
\end{equation}
The behaviors of $\omega(q)$ and $\E(q)$ are depicted in Figure \ref{fig-val2}(a) and \ref{fig-val2}(a), respectively. This alternative form can be useful in computation since \eqref{eq-Eq} is  more manageable than \eqref{eq-Eomega}. Furthermore,
\[
 \inf_{q>0}\E(q)=\E(\ov{q})=\La < 0,\qquad\text{where}\quad \ov{q}:=q(\ov{\omega})=(-4\pi\si\be)^{-1/2\si}.
\]

\begin{figure}[ht]
\centering
\subfloat[][Behavior of $\log(\om(q))$.]
{\includegraphics[width=0.45\textwidth]{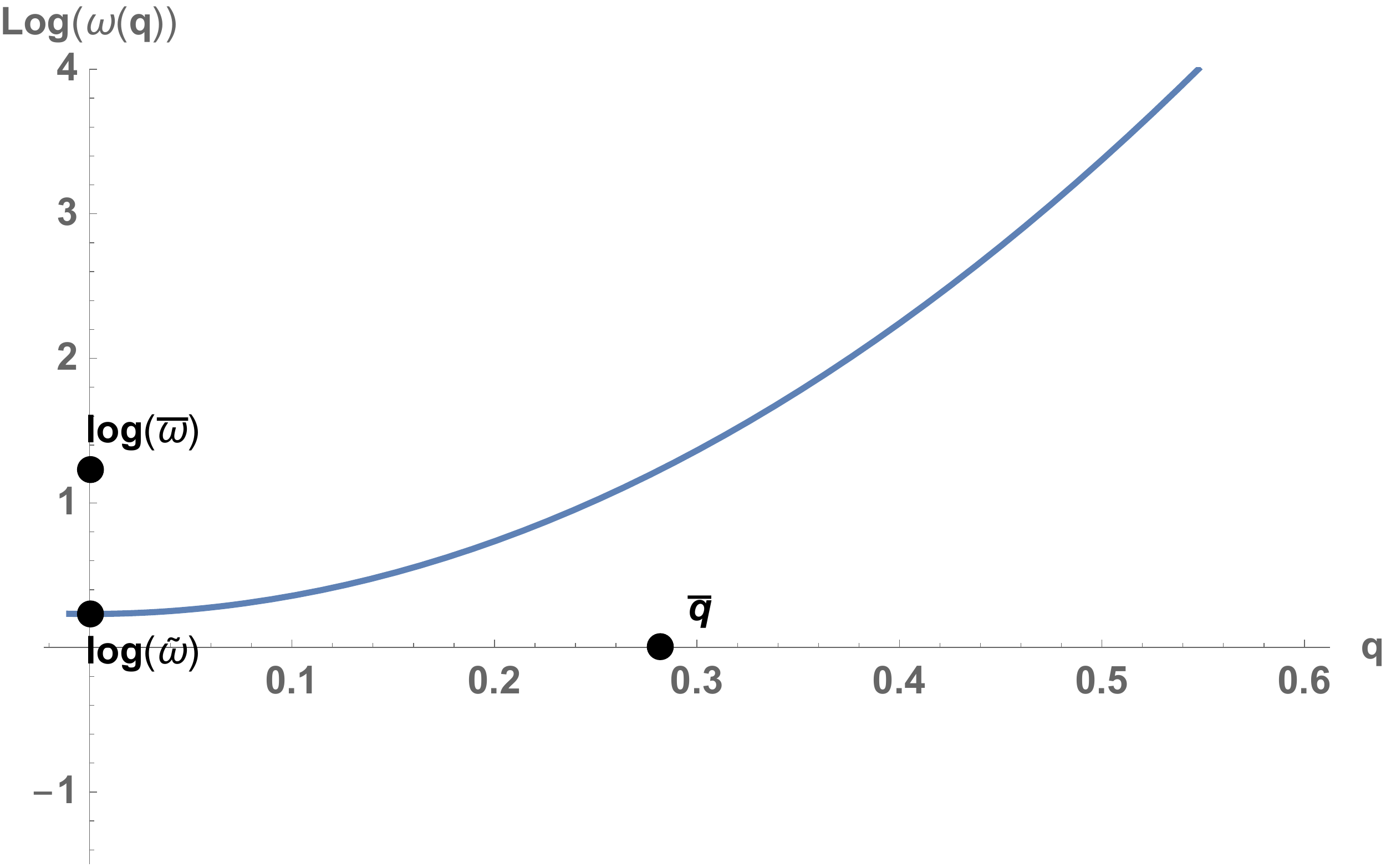}}
\hspace{1cm}
\subfloat[][Behavior of $\E(q)$.]
{\includegraphics[width=0.45\textwidth]{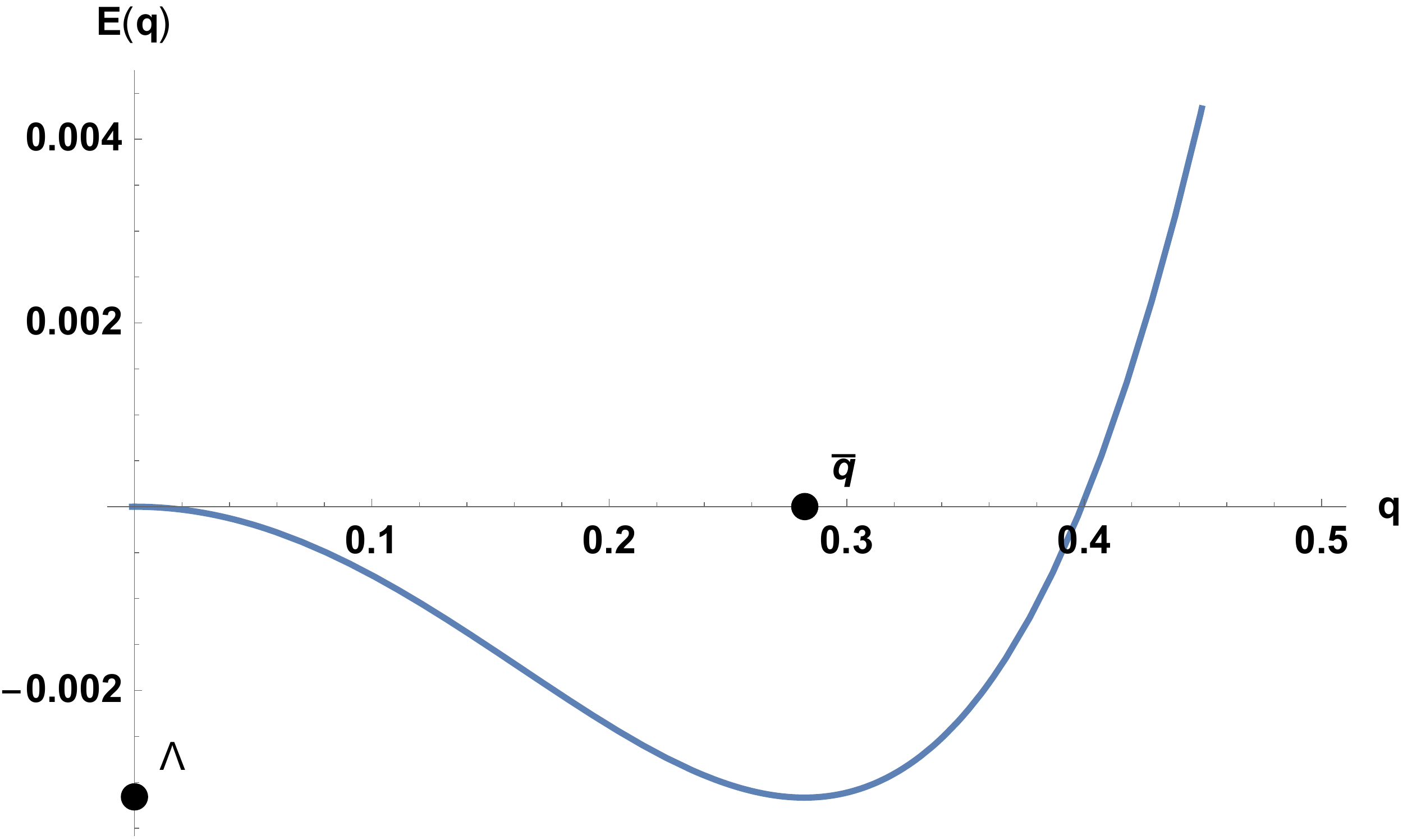}}
\caption{Plots of $\om(q)$ and $\E(q)$ for $q \in \Rp$,  when $\si=1$ and $\be=-1$.}
\label{fig-val2}
\end{figure}

The natural question arising at this point is about the stability of the standing waves. In view of the application of Grillakis-Shatah-Strauss theory \cite{GSS-87}, it is first necessary to compute the mass $\M$ as a function of $\omega$ and $q$.
Exploiting \eqref{eq-green}, one has that
\begin{equation}
 \label{eq-massom}
 \M(\om):=\M(u_\om)=\f{q^2(\om)}{4\pi\om}=\f{1}{4\pi\om}\bigg(-\f{\log(\sqrt{\om}/2)+\gamma}{2\pi\be}\bigg)^{1/\si}.
\end{equation}
On the other hand, one can easily check that
\[
 \M'(\omega)=\f{q^2(\om)}{4\pi\om^2}\underbrace{\bigg[\f{\left(\log(\sqrt{\om}/2)+\gamma\right)^{-1}}{2\si}-1\bigg]}_{=:h(\om)},
\]
whence
\begin{align}
 \label{eq-mass-mon}
 \M'(\om)>0\quad\text{(resp. $\M'(\om)<0$)}\qquad & \Longleftrightarrow \qquad h(\om)>0 \quad\text{(resp. $h(\om)<0$)}\nonumber\\[.2cm]
 \qquad & \Longleftrightarrow \qquad \wt{\om}<\om<\ov{\om}\quad\text{(resp. $\om>\ov{\om}$)}.
\end{align}
In addition, as $\lim_{\om\to\wt{\om}}\M(\om)=\lim_{\om\to+\infty}\M(\om)=0$, there results
\[
 \sup_{\om\in(\wt{\om},+\infty)}\M(\om)=\M(\ov{\om})=\f{\exp^{2\gamma-1/\si}}{16\pi(-4\pi\si\be)^{1/\si}}=:\ov{\mu}.
\]
As a consequence, for every value of the mass $\mu\in(0,\ov{\mu})$ (or, alternatively, of the energy $\E\in(\La,0)$) there exists two distinct families of standing waves $u_{\om_1},\,u_{\om_2}$, such that $\M(\om_1)=\M(\om_2)=\mu$, with $\om_1\in(\wt{\om},\ov{\om})$ and $\om_2\in(\ov{\om},+\infty)$.

Analogous results can be obtained writing the mass of the standing waves in terms of $q$ in place of $\omega$, so that
\begin{equation}
 \label{eq-massq}
 \M(q)=\f{q^2\exp^{2\ga+4\pi\be q^{2\si}}}{16\pi}
\end{equation}
and
\[
 \sup_{q>0}\M(q)=\M(\ov{q})=\ov{\mu}.
\]
The qualitative behavior of $\M(\om)$ and $\M(q)$ is depicted in Figure \ref{fig-val3}.

\begin{figure}[ht]
\centering
\subfloat[][Behavior of $\M(\omega)$.]
{\includegraphics[width=0.45\textwidth]{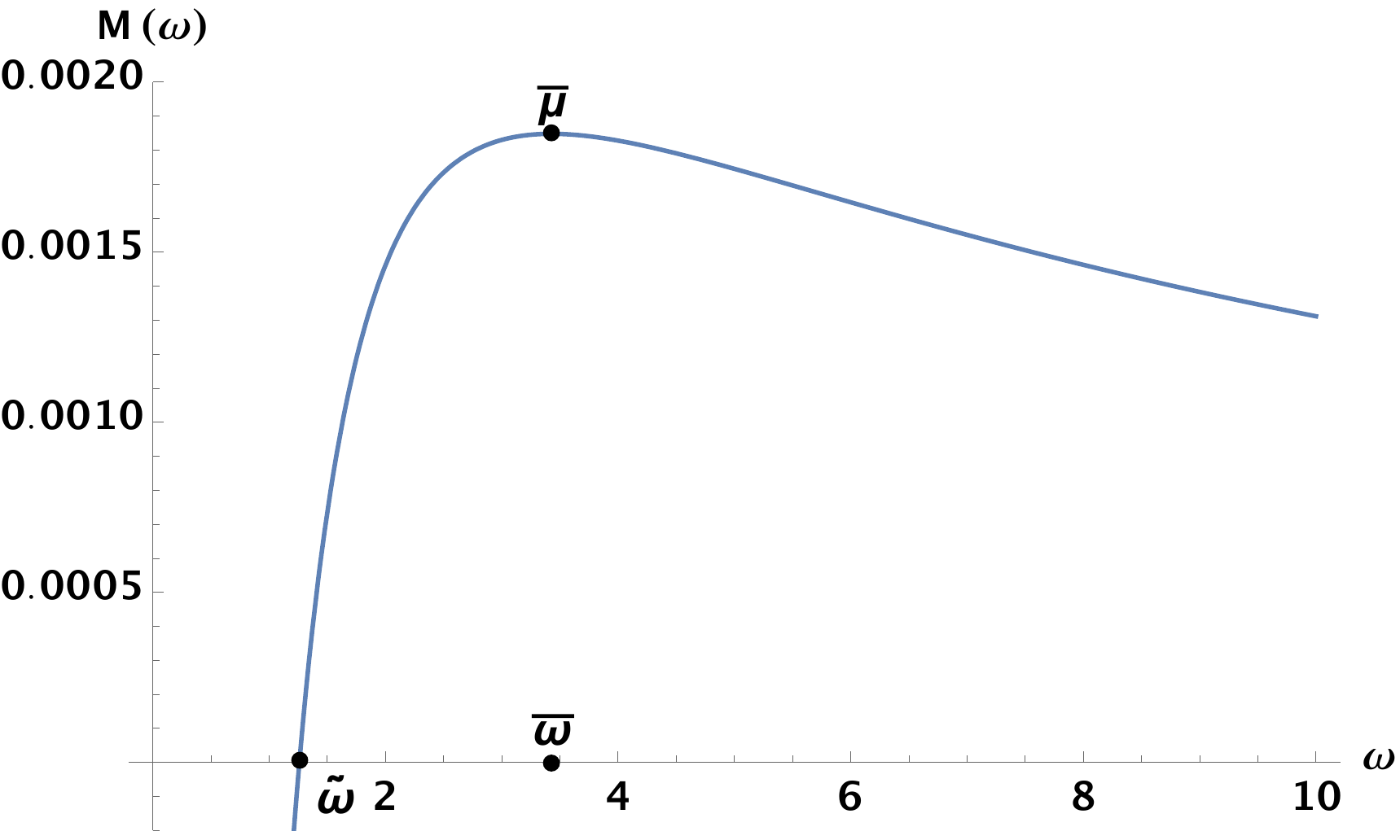}}
\hspace{1cm}
\subfloat[][Behavior of $\M(q)$.]
{\includegraphics[width=0.45\textwidth]{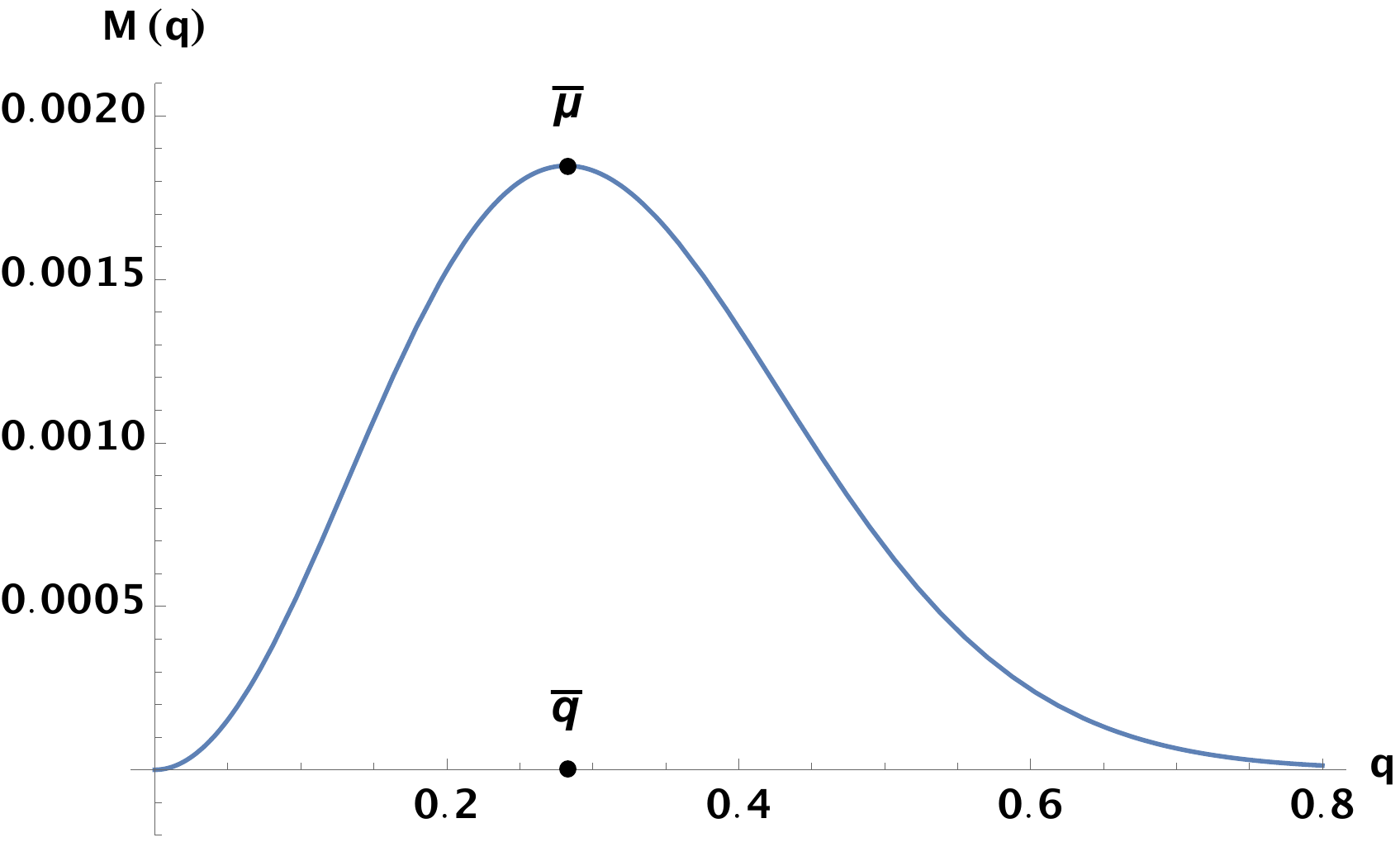}}
\caption{Plots of $\M(\omega)$ and $\M(q)$ when $\si=1$ and $\be=-1$.}
\label{fig-val3}
\end{figure}

For any $\mu>0$ there is no ground state of mass $\mu$, i.e., no global minimizer of the energy constrained on 
\[
 V_\mu:=\{\psi\in V:\M(\psi)=\mu\}.
\]
This can be easily seen if one defines a sequence $\{u_n\}_{n\in\mathbb{N}}$ such that
\[
u_{n}(x):=2\sqrt{\pi\mu\,n}\,\G_1(\sqrt{n}\,x), \qquad \M(u_{n})=\mu.
\]
Indeed, $\left\{u_n\right\}\subset V_\mu$ and
\[
 \E(u_n)=-\sqrt{n}\mu+\bigg(\f{\be (4\pi\mu\,n)^{\sigma}}{\si+1}+\f{\log(\sqrt{n}/2)+\ga}{2\pi}\bigg)(\pi\mu\,n)^{1/ 2} \xrightarrow[n \to + \infty]{}-\infty,
\]
since $\be<0$. Hence, the stability analysis requires the use of the techniques developed in \cite{GSS-87}, as shown in  Theorem \ref{thm-sfoc}. 

First, we recall the definition of \emph{orbitally} stable standing wave. To this aim, preliminarily, we endow the energy space $V$ with a norm. Due to the several possible decompositions of a function $\chi\in V$ for different values of the spectral parameter $\la>0$ (see \eqref{eq-ensp}), in order to obtain a suitable norm, one has to fix a value $\la=\ov{\la}>0$ and 
then set
\[
 \|\chi\|_{\ov{\la}}^2:=\|\chi_{\ov{\la}}\|_{H^1(\R^2)}^2+\f{|q|^2}{4\pi\ov{\la}}.
\]
Clearly, any other choice of $\la$ gives rise to an equivalent norm. In this section we will set $\ov{\la}=1$ for the sake of simplicity.

\begin{defn}
 The standing wave $u_\om$ is said to be \emph{orbitally stable} whenever for every $\varepsilon>0$ there exists $\delta>0$ such that: if $\|\psi_0-e^{\imath\,\eta}u_\om\|_1<\delta$, for some $\eta\in\R$, and $\psi(t)$ is a solution of \eqref{eq-cpweak} on $[0,T^*)$ with initial condition $\psi_0$, then $\psi(t)$ can be continued to a solution on $[0,+\infty)$ and
 \[
  \sup_{t\in\Rp}\,\inf_{\eta\in\R}\left\|\psi(t)-\exp^{\imath\eta}u_\omega\right\|_1<\varepsilon.
 \]
 Otherwise the standing wave is called \emph{unstable}.
\end{defn}

\begin{thm}[Stability in the focusing case]
 \label{thm-sfoc}
 Let $\sigma\geq1/2$ and $\be<0$. The standing waves defined in \eqref{eq-stan1}-\eqref{eq-stan2} are orbitally stable if $\om\in(\wt{\om},\ov{\om})$ and unstable if $\omega>\ov{\om}$ (where $\ov{\om}$ is given in \eqref{eq-ommin}).
\end{thm}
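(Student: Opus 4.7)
My plan is to prove Theorem \ref{thm-sfoc} by adapting the Grillakis--Shatah--Strauss (GSS) machinery \cite{GSS-87} to the concentrated setting. The starting point is that the Cauchy problem \eqref{eq-cpweak} is locally well-posed in $V$, generates a $U(1)$-invariant flow with conserved mass $\M$ and energy $\E$, and the explicit family $\omega\mapsto u_\omega=q(\omega)\G_\omega$ is a $C^1$ branch of critical points of the action $\S_\omega:=\E+\omega\M$ on $V$. By GSS, once the appropriate spectral hypotheses on the linearized operator $L_\omega:=\S_\omega''(u_\omega)$ are in place, orbital stability versus instability is decided by the sign of the slope $d'(\omega)$, which for our action equals $\M'(\omega)$. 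By the computation \eqref{eq-mass-mon} already carried out in the excerpt, $\M'(\omega)>0$ on $(\wt{\om},\ov{\om})$ and $\M'(\omega)<0$ on $(\ov{\om},+\infty)$, so stability for $\omega\in(\wt{\om},\ov{\om})$ and instability for $\omega>\ov{\om}$ will follow as soon as the spectral assumptions on $L_\omega$ are verified.

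The main work therefore lies in computing $L_\omega$ and analyzing its spectrum on the energy space $V$. Concretely I would split $L_\omega$ into its action on the real and imaginary parts of the perturbation (writing $\psi=u_\omega+v+iw$), exploit the $U(1)$-symmetry which forces $iu_\omega\in\ker L_\omega^{(-)}$, and use the decomposition $v=v_\lambda+q_v\G_\lambda$, $w=w_\lambda+q_w\G_\lambda$ from \eqref{eq-ensp} to obtain a quadratic form of the shape
\begin{equation*}
\langle L_\omega v,v\rangle=\|\na v_\omega\|^2+\omega\big(\|v_\omega\|^2-\|v\|^2\big)+\Phi_\omega^{(+)}(q_v)|q_v|^2,
\end{equation*}
and similarly for $w$ with a different scalar coefficient $\Phi_\omega^{(-)}$. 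The $H^1$ component contributes only nonnegative spectrum (the operator is essentially $-\lap+\omega$ on the regular part), so every negative direction must come from the one-dimensional charge channel; this reduces the Morse index count to checking the signs of the two scalar quantities $\Phi_\omega^{(\pm)}$, which are obtained by differentiating $\S_\omega$ in the direction $\G_\omega$ twice and using the explicit relations \eqref{eq-stan2}, \eqref{eq-omdiq}, and \eqref{eq-Eq}. I expect to find $n(L_\omega)=1$ negative eigenvalue, $\ker L_\omega=\mathrm{span}\{iu_\omega\}$, and the remainder of the spectrum bounded below by $\omega>0$; this is what GSS requires.

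For the stable regime I would then verify the coercivity estimate on the orthogonal complement of $\{iu_\omega,\partial_\omega u_\omega\}$ via the standard modulational decomposition and Lyapunov argument: close to the orbit, any $\psi$ with $\M(\psi)=\M(u_\omega)$ admits a phase $\eta(t)$ making $\psi-e^{i\eta}u_\omega$ orthogonal to $iu_\omega$, and then the coercivity of $L_\omega$ on that codimension-two subspace (coming from $n(L_\omega)=1$ together with $\M'(\omega)>0$) gives control of $\|\psi(t)-e^{i\eta(t)}u_\omega\|_1$ via conservation of $\S_\omega(\psi(t))$. For $\omega>\ov{\om}$ the sign of $\M'(\omega)$ is reversed and the GSS instability part produces, via the explicit construction of a Lyapunov-type function built from the negative eigendirection of $L_\omega$, a sequence of initial data arbitrarily close to $u_\omega$ whose evolution leaves an $\varepsilon$-neighborhood of the orbit in finite time.

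The principal obstacle will be the spectral analysis of $L_\omega$: because the nonlinearity is concentrated, $L_\omega$ is not a standard Schr\"odinger operator but a self-adjoint realization associated with the quadratic form above, and one must identify its domain and prove that the negative eigenvalue count is exactly one. The other delicate point is making sense of the GSS abstract framework on the non-standard energy space $V$ defined in \eqref{eq-ensp}, in particular verifying that the symplectic structure, the conserved charges and the generator of the $U(1)$-action satisfy the regularity assumptions of \cite{GSS-87}; the norm $\|\cdot\|_1$ introduced above is precisely designed to cope with this, but some care is needed when translating the instability argument into the singular decomposition $\psi=\phi_\lambda+qG_\lambda$.
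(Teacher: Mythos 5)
Your proposal follows essentially the same route as the paper: verification of the Grillakis--Shatah--Strauss assumptions, computation of $H_\om=d^2\S_\om(u_\om)$ in real/imaginary blocks using the decomposition $v=v_\la+q_v\G_\la$, and the final reduction to the sign of $D''(\om)=\M'(\om)$, which is settled by \eqref{eq-mass-mon}. One caution on the step you yourself flag as the principal obstacle: the quadratic form does not split into an ``$H^1$ channel'' plus a decoupled ``charge channel,'' because the term $\om\big(\|v_\om\|^2-\|v\|^2\big)$ mixes $v_\om$ and $q_v$; in fact, with the relation $\log(\sqrt{\om}/2)+\ga=-2\pi\be q^{2\si}(\om)$ the scalar coefficient of $|q_v|^2$ in the imaginary block vanishes identically, so the Morse index cannot be read off from the sign of a coefficient $\Phi_\om^{(\pm)}$ alone. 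The paper closes this step by recognizing each block as a shift by $\om$ of a two-dimensional delta-interaction Hamiltonian $H_{\al_i}$ as in \eqref{eq-opdelta1}--\eqref{eq-opdelta2}, with strengths $\al_1=(2\si+1)\be q^{2\si}(\om)$ and $\al_2=\be q^{2\si}(\om)$: such operators have exactly one simple negative eigenvalue $-4\exp^{-4\pi\al_i-2\ga}$ together with essential spectrum $[0,+\infty)$, which via \eqref{eq-omdiq} gives \eqref{eq-spectrum} and hence properties (i)--(iii) at once, with $u_\om$ spanning the kernel. With that substitution for your spectral step, your outline coincides with the paper's proof.
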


Note that the previous theorem entails that, for every mass $\mu\in(0,\ov{\mu})$, there is a pair of standing waves of mass $\mu$, where the one with low frequency is stable, while the one with high frequency is unstable.

\begin{rem}
The assumption $\sigma\geq1/2$ is only related to the local well-posedness of \eqref{eq-cpweak} proved in \cite{CCT-ANIHPC19}. It is likely that it could be dropped by means of a more refined analysis of the local well-posedness and hence is not actually relevant in the stability analysis.
\end{rem}

\begin{proof}[Proof of Theorem \ref{thm-sfoc}]
 The proof is based on \cite[Theorem 3]{GSS-87}. Assumptions 1 of \cite[Theorem 3]{GSS-87} is clearly satisfied as \eqref{eq-cpweak} is locally well-posed with preserved mass and energy, while the fulfillment of Assumption {2} is a direct consequence of the form of the standing waves defined in \eqref{eq-stan1}-\eqref{eq-stan2}.

Concerning Assumption 3, we define the action functional associated with \eqref{eq-cpweak}, namely
 \[
  \S_\om:V\to\R,\qquad\S_\om(u):=\E(u)+\om\M(u).
 \]
Recall that, as $u_\om$ is a standing wave, $d\S_\om(u_\om)=0$, where $d\S_\om$ denotes the Fr\'echet differential. Then, define  the operator
 \begin{equation}
  \label{eq-Hom}
  H_\om:V\to V^*,\qquad H_\om:=d^2\S_\om(u_\om).
 \end{equation}
We have to prove that for every $\om\in(\wt{\om},+\infty)$:
 \begin{itemize}
  \item[(i)] $H_\om$ has exactly one negative simple eigenvalue;
  \item[(ii)] the kernel of $H_\om$ coincides with the span of $u_\om$;
  \item[(iii)] the rest of $\sigma(H_\om)$ is positive and bounded away from zero.
 \end{itemize}

 As $d^2\M(u_\om)=2\times\Id$, it is necessary to compute only $d^2\E(u_\om)$. Since $\E$ is a functional of class $C^2$ we can compute the G$\hat{\mathrm{a}}$teaux second differential in place of the Fr\'echet second differential, i.e.,
 \[
  d^2\E(u_\om)[h,k]=\left.\f{\partial^2 \E(u_\om+\nu h+\tau k)}{\partial\nu\partial\tau}\right|_{\nu=\tau=0}.
 \]
 In addition, for the sake of simplicity, we can set $\lambda=\om$ in the definition of $\E$. Therefore,  standard computations  yields \begin{multline*}
  \f{\partial \E(u_\om+\nu h+\tau k)}{\partial\tau}= 2\Re\bigg\{\scal{\nu\nabla h_\om+\tau\nabla k_\om}{\nabla k_\om}+\om\big(\scal{\nu h_\om+\tau k_\om}{k_\om}-\scal{u_\om+\nu h+\tau k}{k}\big)+\\[.2cm]
  +q_k\big(q^*(\om)+\nu q_h^*+\tau q_k^*\big)\f{\log(\sqrt{\om}/2)+\ga}{2\pi}+\be q_k\big(q(\om)+\nu q_h+\tau q_k\big)^{\si}\big(q^*(\om)+\nu q_h^*+\tau q_k^*\big)^{\si+1}\bigg\},
 \end{multline*}
 so that
 \begin{multline*}
  \f{\partial^2 \E(u_\om+\nu h+\tau k)}{\partial\nu\partial\tau}= 2\Re\bigg\{\scal{\nabla h_\om}{\nabla k_\om}+\om\big(\scal{h_\om}{k_\om}-\scal{h}{k}\big)+\\[.2cm]
  +q_kq_h^*\f{\log(\sqrt{\om}/2)+\ga}{2\pi}+\si\be q_kq_h\big(q(\om)+\nu q_h+\tau q_k\big)^{\si-1}\big(q^*(\om)+\nu q_h^*+\tau q_k^*\big)^{\si+1}+\\[.2cm]
  +(\si+1)\be q_kq_h^*\big(q(\om)+\nu q_h+\tau q_k\big)^{\si}\big(q^*(\om)+\nu q_h^*+\tau q_k^*\big)^{\si}\bigg\}
 \end{multline*}
 and hence
 \begin{multline}
  \label{eq-dueE}
  \left.\f{\partial^2 \E(u_\om+\nu h+\tau k)}{\partial\nu\partial\tau}\right|_{\nu=\tau=0}= 2\Re\big\{\scal{\nabla h_\om}{\nabla k_\om}+\om\big(\scal{h_\om}{k_\om}-\scal{h}{k}\big)\big\}+\\[.2cm]
  +\f{\log(\sqrt{\om}/2)+\ga}{\pi}\Re\{q_kq_h^*\}+2\be q^{2\si}(\om)\Re\{\si q_k^*q_h^*+(\si+1)q_k^*q_h\}.
 \end{multline}
 Now, if we split each quantity as real and imaginary part, i.e.,
 \begin{gather*}
  h=h^r+\imath h^i, \quad k=k^r+\imath k^i,\\[.2cm]
  h_\om=h_\om^r+\imath h_\om^i, \quad k_\om=k_\om^r+\imath k_\om^i,\\[.2cm]
  q_h=q_h^r+\imath q_h^i, \quad q_k=q_k^r+\imath q_k^i,
 \end{gather*}
 then \eqref{eq-dueE} reads
 \[
  \left.\f{\partial^2 \E(u_\om+\nu h+\tau k)}{\partial\nu\partial\tau}\right|_{\nu=\tau=0}=B_1[h^r,k^r]+B_1[h^i,k^i],
 \]
 where $B_1,\,B_2$ are two sesquilinear forms given by
 \begin{multline*}
  B_1[h^r,k^r]:=2\big(\scal{\nabla h_\om^r}{\nabla k_\om^r}+\om\big(\scal{h_\om^r}{k_\om^r}-\scal{h^r}{k^r}\big)\big)\\[.2cm]
  +\left(\f{\log(\sqrt{\om}/2)+\ga}{\pi}+2\be (2\si+1)q^{2\si}(\om)\right)q_k^rq_h^r
 \end{multline*}
 and
 \[
  B_2[h^i,k^i]:=2\big(\scal{\nabla h_\om^i}{\nabla k_\om^i}+\om\big(\scal{h_\om^i}{k_\om^i}-\scal{h^i}{k^i}\big)\big)
  +\left(\f{\log(\sqrt{\om}/2)+\ga}{\pi}+2\be q^{2\si}(\om)\right)q_k^iq_h^i.
 \]
 Furthermore, one notes that $B_1,\,B_2$ are the sesquilinear form (restricted to real-valued functions) associated with the operators $H_{\al_1},\,H_{\al_2}:L^2(\R^2)\to L^2(\R^2)$ with domains
 \begin{multline}
  \label{eq-opdelta1}
 \mathrm{dom}(H_{\al_j}):=\bigg\{\psi\in L^2(\R^2):\psi=\phi_\la+q\G_\la,\,\phi_\la\in H^2(\R^2),\,q\in \C,\\[.2cm]
 \phi_\la(\z)=\bigg(\al_i+\f{\log(\sqrt{\la}/2)+\ga}{2\pi}\bigg)q\bigg\},\qquad i=1,\,2,
\end{multline}
with $\la>0$, and action
\begin{equation}
 \label{eq-opdelta2}
 (H_{\al_i}+\la)\psi:=(-\Delta+\la)\phi_\la,\qquad\forall\psi\in\mathrm{dom}(H_{\al_i}),\qquad i=1,\,2,
\end{equation}
where
\begin{equation}
 \label{eq-al12}
 \al_1=(2\si+1) \beta q^{2\si}(\om),\qquad \al_2=\beta q^{2\si}(\om).
\end{equation}
Summing up, 
\[
 \left.\f{\partial^2 \E(u_\om+\nu h+\tau k)}{\partial\nu\partial\tau}\right|_{\nu=\tau=0}=2(h^r,h^i)\begin{pmatrix} H_{\al_1} & 0 \\ 0 & H_{\al_2} \end{pmatrix}\begin{pmatrix} k^r \\ k^i \end{pmatrix},
\]
whence
\[
 d^2\E(u_\om)=2\begin{pmatrix} H_{\al_1} & 0 \\ 0 & H_{\al_2} \end{pmatrix}
\]
and, consequently,
\[
 H_\om=2\begin{pmatrix} H_{\al_1}+\om & 0 \\ 0 & H_{\al_2}+\om \end{pmatrix}.
\]
In order to verify properties (i), (ii) and (iii), it suffices to observe that
\[
 \sigma(H_{\al_1})=\left\{-\om\,\exp^{-8\pi\,\be\,\si q^{2\si}}\right\}\cup[0,+\infty),\qquad \sigma(H_{\al_2})=\{-\om\}\cup[0,+\infty),
\]
with $-\om\,\exp^{-8\pi\be\,\si q^{2\si}}$ and $-\om$ simple eigenvalues, and that $u_\om$ is the eigenfunction associated with $-\om$. Indeed, this entails 
\begin{equation}
 \label{eq-spectrum}
 \sigma(H_\om)=\left\{\om(1-\exp^{-8\pi\be\si q^{2\si}})\right\}\cup\{0\}\cup[\om,+\infty),
\end{equation}
which  proves that $H_\om$ possesses one simple negative eigenvalue (since $1-\exp^{-8\pi\be\si q^{2\si}}<0$, as $\be<0$), that the kernel of $H_\om$ is the span of $u_\om$ and that the rest of the spectrum is positive and bounded away from zero.

Finally, in order to conclude, it is just sufficient to verify for which values of $\om$ the scalar function
 \[
  D:(\wt{\om},+\infty)\to\R,\qquad D(\om):=\S_\om(u_\om),
 \]
 is strictly convex, which implies that $u_\om$ is stable, or strictly concave, which implies that $u_\om$ is unstable. However, by the properties of the standing waves, $D''(\om)=\M'(\om)$ and therefore, recalling \eqref{eq-mass-mon}, one concludes the proof.
\end{proof}


\section{Defocusing case}\label{defocusing}

One can easily check that there exists a family of standing waves  in the defocusing case  $\be>0$ as well:
\begin{equation}
 \label{eq-stan3}
 \psi_\om(t,\x):=\exp^{\i\om t}\,\exp^{\i\eta}\,u_\om(\x),\qquad u_\om(\x):=q(\om)\G_\om(\x),\qquad q(\om):=\bigg(-\f{\log\big(\sqrt{\om}/2\big)+\ga}{2\pi\be}\bigg)^{1/2\si}
\end{equation}
with $\eta\in\R$, defined for
\[
 \om\in(0,\wt{\om}),\qquad\text{where}\quad \wt{\om}:=4\exp^{-2\ga}.
\]
The behavior of $q(\om)$ is shown in Figure \ref{fig-val4}(a).

In addition, simple computations show that the form of $\E(\om)$ is still given by \eqref{eq-Eomega}, but in this case the function $\E(\om)$ is unbounded from below, due to the fact that $\beta>0$. The behavior of $E(\om)$ is depicted in Figure \ref{fig-val4}(b).

\begin{figure}[ht]
\centering
\subfloat[][Behavior of $q(\om)$.]
{\includegraphics[width=0.45\textwidth]{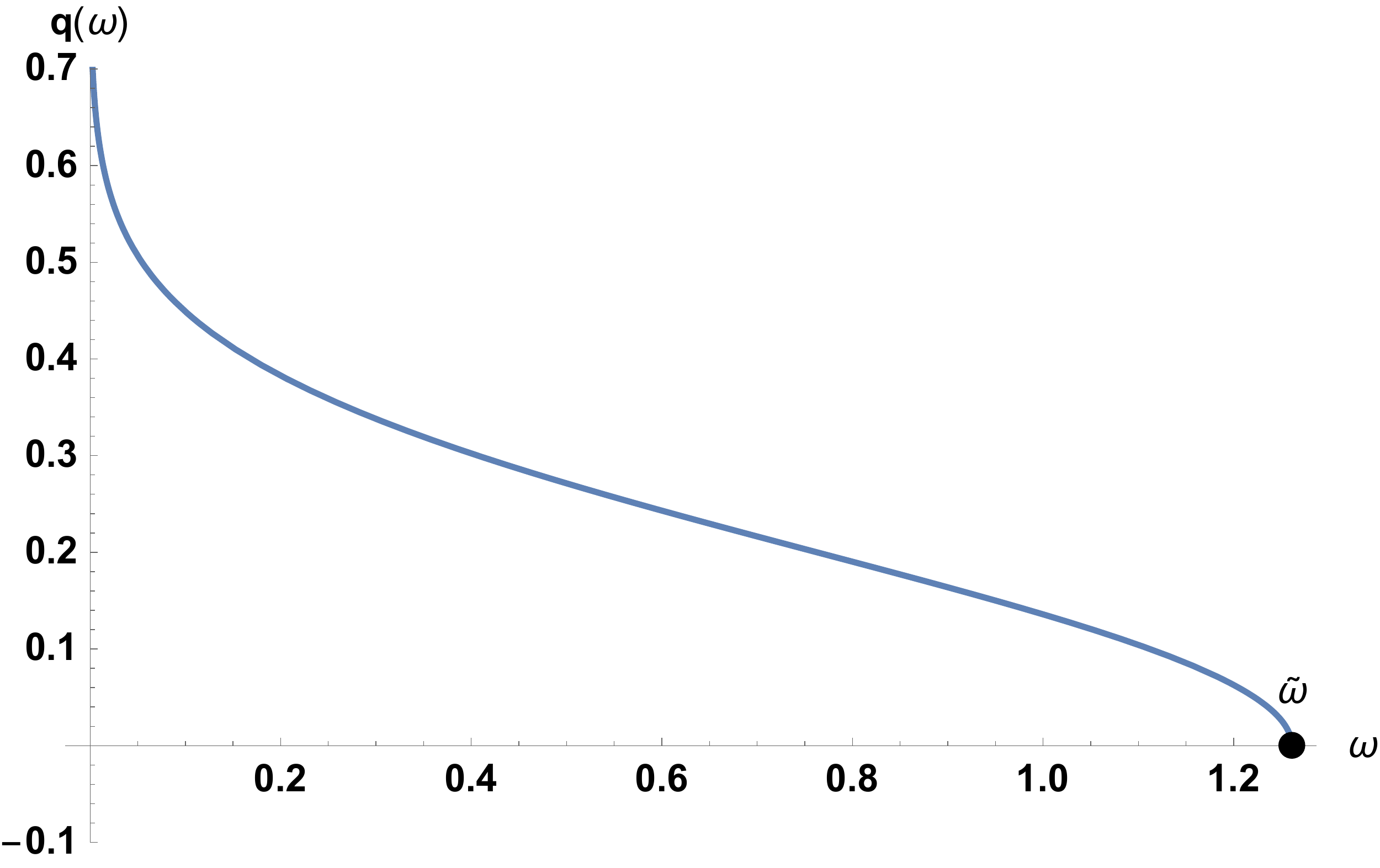}}
\hspace{1cm}
\subfloat[][Behavior of $\E(\om)$.]
{\includegraphics[width=0.45\textwidth]{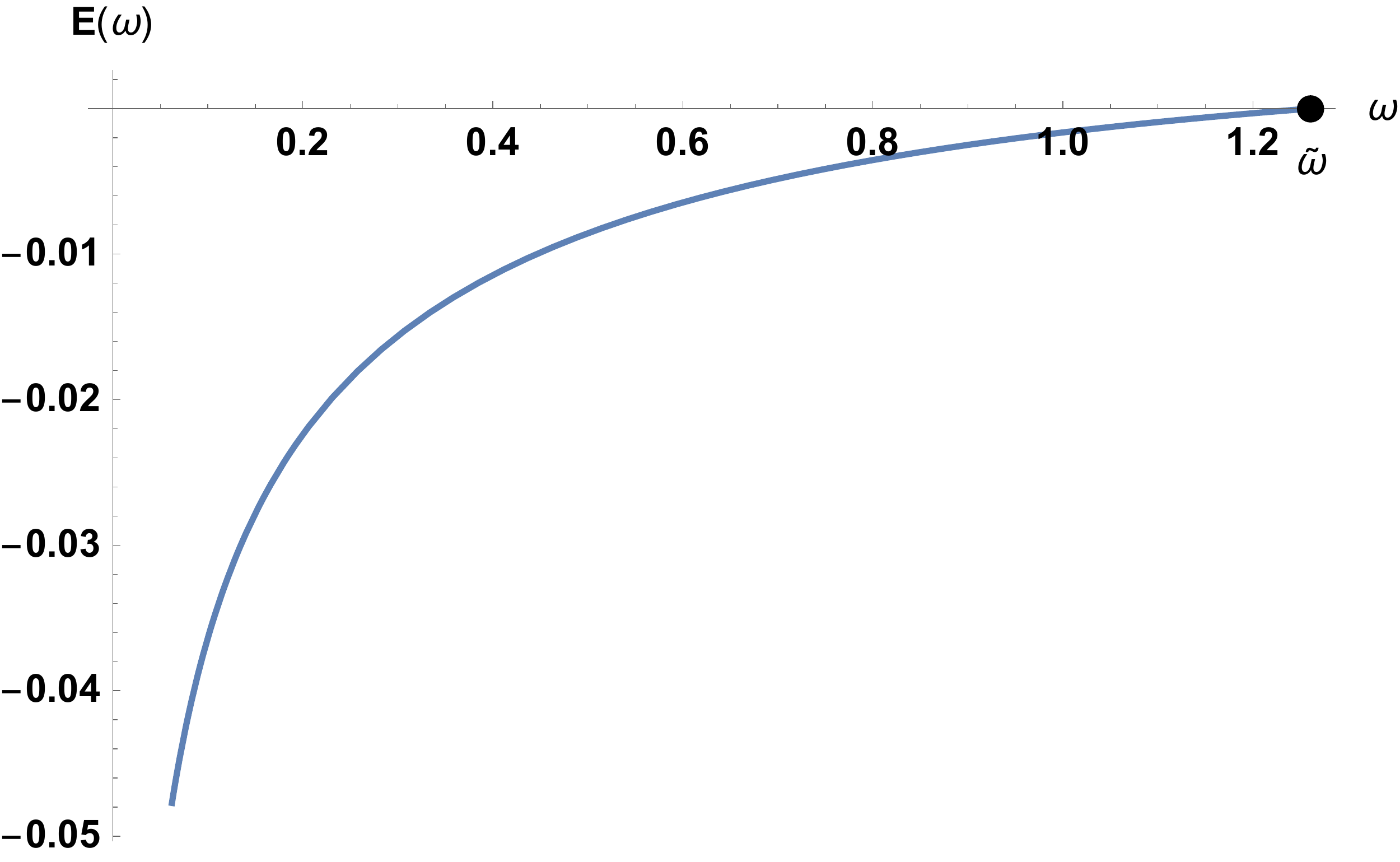}}
\caption{Plots of $q(\om)$ and $\E(\om)$ for $ \omega \in (0,\wt{\om})$,  when $\si=1$ and $\be=1$.}
\label{fig-val4}
\end{figure}

From (\ref{eq-stan3}) one has  that the function $q(\omega)$ is invertible. Again we get that  $\omega(q)$ reads as \eqref{eq-omdiq}
and, plugging \eqref{eq-omdiq} into \eqref{eq-Eomega}, one obtains \eqref{eq-Eq} for $E(q)$. 
The behavior of $\omega(q)$ and $\E(q)$ is  depicted in Figure \ref{fig-val5}(a) and \ref{fig-val5}(b). 

\begin{figure}[ht!]
\centering
\subfloat[][Behavior of $\om(q)$.]
{\includegraphics[width=0.40\textwidth]{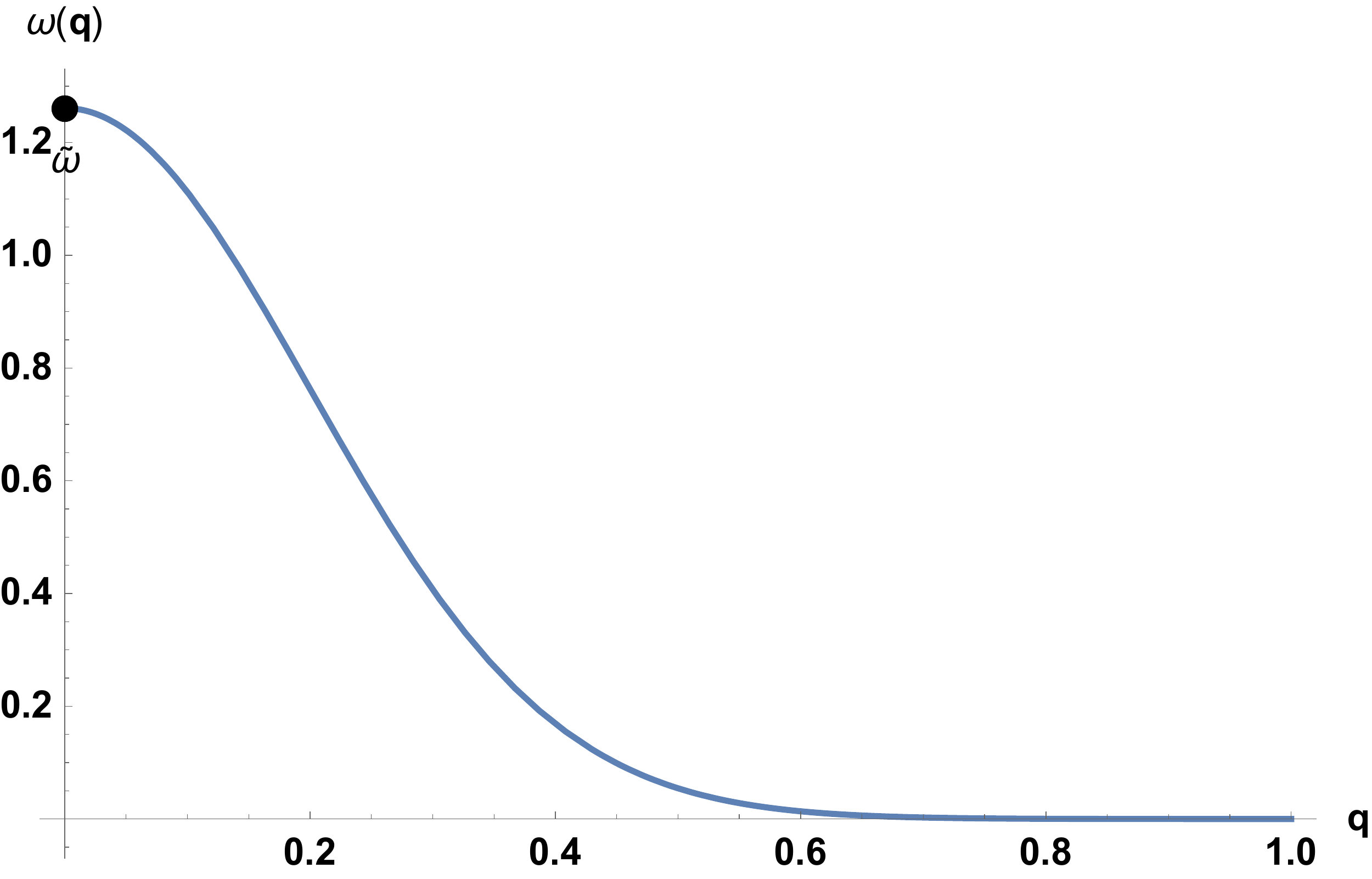}}
\hspace{1cm}
\subfloat[][Behavior of $\E(q)$.]
{\includegraphics[width=0.40\textwidth]{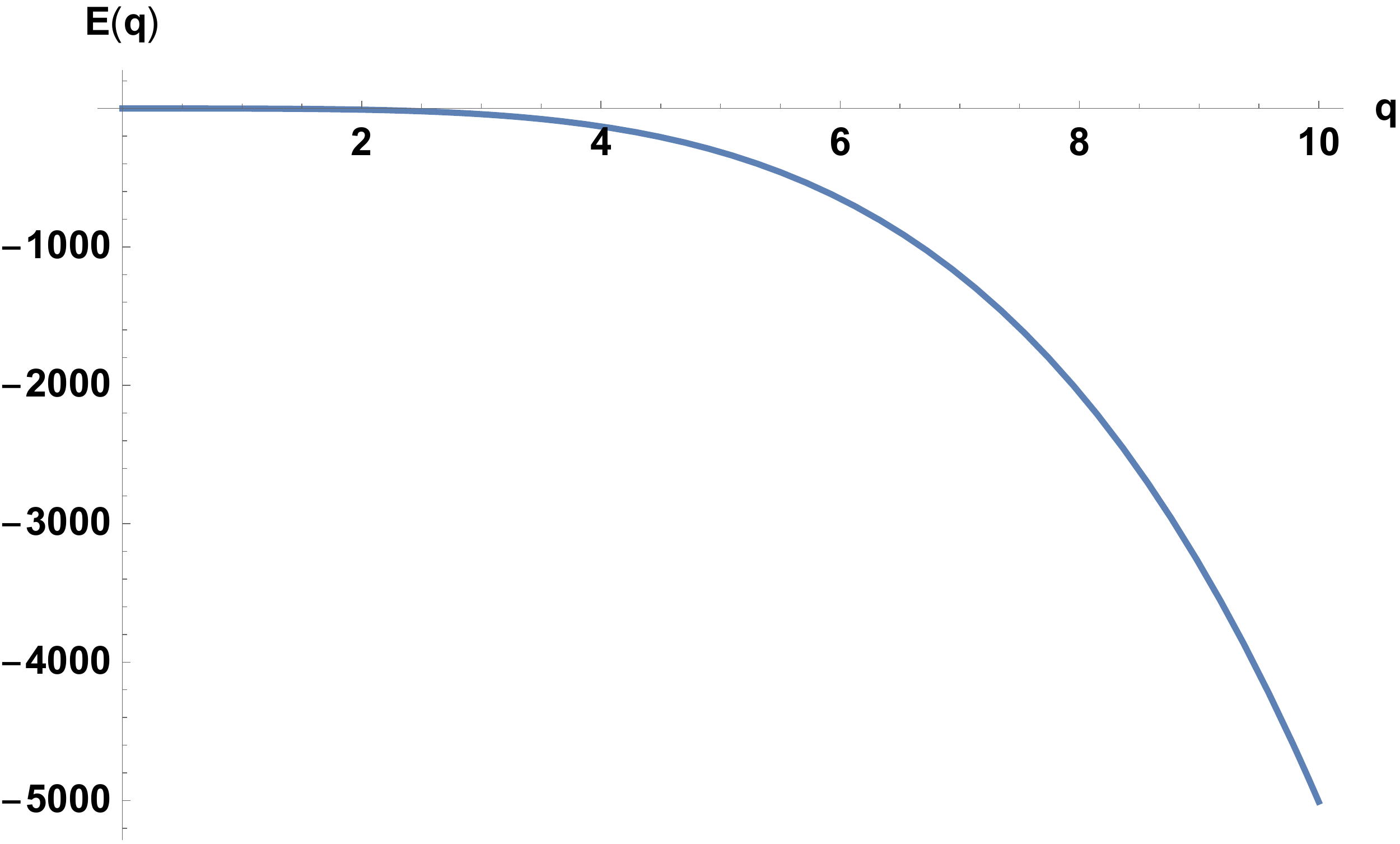}}
\caption{Plots of $\om(q)$ and $\E(q)$ for $q \in \Rp$,  when $\si=1$ and $\be=1$.}
\label{fig-val5}
\end{figure}

\begin{rem}\label{rem-imp}
Let us point out a relevant difference between the focusing and the defocusing case: $\M(\om)$ and $\M(q)$, given by \eqref{eq-massom} and 
\eqref{eq-massq}, respectively, are strictly monotone on their domain with range $\R^+$. In particular, this means that, in the defocusing case,  for every $\mu\in\R^+$, there exists a unique (up to a  phase factor)  standing wave $u_{\om_\mu}$ of mass $\mu$.
\end{rem}
Concerning the stability of these standing waves, one can prove the following

\begin{thm}[Stability in the defocusing case]
 Let $\sigma\geq1/2$ and $\be>0$. The standing waves defined by \eqref{eq-stan3} are orbitally stable for every $\om\in(0,\wt{\om})$.
\end{thm}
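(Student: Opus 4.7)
The plan is to apply \cite[Theorem 3]{GSS-87} along exactly the same lines as in the proof of Theorem~\ref{thm-sfoc}. Assumption~1 (local well-posedness for $\si\geq 1/2$ with conserved mass and energy) and Assumption~2 (the explicit smooth family \eqref{eq-stan3}) are unaffected by the sign of $\be$, so the entire task reduces to verifying Assumption~3, i.e.\ the spectral structure of the Hessian $H_\om=d^2\S_\om(u_\om)$.

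Inspecting the computations carried out in the proof of Theorem~\ref{thm-sfoc}, one sees that \eqref{eq-dueE} and the subsequent identification
\[
 H_\om=2\begin{pmatrix} H_{\al_1}+\om & 0 \\ 0 & H_{\al_2}+\om \end{pmatrix},\qquad \al_1=(2\si+1)\be q^{2\si}(\om),\quad \al_2=\be q^{2\si}(\om),
\]
are algebraically independent of the sign of $\be$, and so are the spectral formulas $\sigma(H_{\al_1})=\{-\om\exp^{-8\pi\be\si q^{2\si}}\}\cup[0,+\infty)$ and $\sigma(H_{\al_2})=\{-\om\}\cup[0,+\infty)$. Hence, exactly as in \eqref{eq-spectrum},
\[
 \sigma(H_\om)=\big\{\om(1-\exp^{-8\pi\be\si q^{2\si}})\big\}\cup\{0\}\cup[\om,+\infty).
\]
The only new input is the sign of the leftmost eigenvalue: since now $\be>0$, we have $\exp^{-8\pi\be\si q^{2\si}}<1$, so $\om(1-\exp^{-8\pi\be\si q^{2\si}})>0$. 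Consequently $H_\om\geq 0$, its kernel is one-dimensional and spanned by the phase-tangent $\imath u_\om$ (from the zero eigenvalue of $H_{\al_2}+\om$), and the remainder of the spectrum is strictly separated from zero.

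With $n(H_\om)=0$ and the kernel exhausted by the phase symmetry, the GSS stability criterion is met automatically -- independently of the sign of $D''(\om)=\M'(\om)$ (which, for completeness, is strictly negative on $(0,\wt\om)$ by the same computation leading to \eqref{eq-mass-mon}, since $\log(\sqrt\om/2)+\ga<0$ there, so $h(\om)<-1$). Equivalently, one can bypass the abstract theorem and argue directly: $H_\om\geq 0$ with kernel equal to the phase-orbit tangent yields the coercivity
\[
 \S_\om(\psi)-\S_\om(u_\om)\geq c\inf_{\eta\in\R}\|\psi-\exp^{\imath\eta}u_\om\|_1^2
\]
in a neighborhood of the orbit $\{\exp^{\imath\eta}u_\om\}$, and combined with conservation of $\S_\om=\E+\om\M$ this closes a Lyapunov-type argument giving orbital stability for every $\om\in(0,\wt\om)$. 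I do not anticipate any substantial obstacle, since the hard technical work -- the spectrum of the point-interaction operators $H_{\al_i}$ and the well-posedness theory -- has already been performed for Theorem~\ref{thm-sfoc}, and the only genuinely new step is the one-line sign inspection $\be>0\Rightarrow\exp^{-8\pi\be\si q^{2\si}}<1$.
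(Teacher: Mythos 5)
Your proposal is correct and follows essentially the same route as the paper: reuse the Hessian computation from the focusing case, observe that for $\be>0$ the would-be negative eigenvalue $\om(1-\exp^{-8\pi\be\si q^{2\si}})$ becomes positive, and conclude stability without any convexity condition on $D(\om)$. The only cosmetic difference is that the paper invokes \cite[Theorem 1]{GSS-87} (the positive-Hessian case) rather than \cite[Theorem 3]{GSS-87}, whose hypothesis of exactly one negative eigenvalue would not be met here -- a point you effectively acknowledge when you note that stability is automatic once $n(H_\om)=0$.
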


\begin{proof}
 The proof is analogous to the one of Theorem \ref{thm-sfoc}. The main difference is that the key tool now is \cite[Theorem 1]{GSS-87},  instead of \cite[Theorem 3]{GSS-87}. Assumptions 1 and 2 of \cite[Theorem 1]{GSS-87} are the same of \cite[Theorem 3]{GSS-87} and, hence, are easily satisfied, as outlined in the proof of Theorem \ref{thm-sfoc}.
 
 On the other hand, in order to prove the stability of  standing waves, it is sufficient to prove that the operator $H_\om$ defined in \eqref{eq-Hom} satisfies the properties (ii) and (iii). However, arguing exactly as in the proof of Theorem \ref{thm-sfoc}, there results again that
 \[
  H_\om=2\begin{pmatrix} H_{\al_1} +\om & 0 \\ 0 & H_{\al_2} +\om \end{pmatrix}
 \]
 with $H_{\al_1},\,H_{\al_2}$ defined in \eqref{eq-opdelta1} and \eqref{eq-opdelta2} and $\al_1,\,\al_2$ given by \eqref{eq-al12}. Hence, the spectrum of $H_\om$ is given again by \eqref{eq-spectrum}, but now,	 as $\be>0$ and $\om\in(0,\wt{\om})$, there is no negative eigenvalue so that (ii) and (iii) are satisfied and the proof is complete.
\end{proof}

Moreover, in the defocusing case it is possible to give a further characterization of the standing waves, given by the following

\begin{thm}[Ground states in the defocusing case]
 Let $\be>0$ and $\mu>0$. Then, the energy functional $\E$ restricted to the manifold $V_\mu$ has a unique (up to a phase factor) global minimizer, which is of the form  \eqref{eq-stan3} with $\om=\om_\mu$, where $\om_\mu$ is the unique solution of
 \begin{equation}
  \label{eq-ommu}
  \log(2/\sqrt{\om})-\gamma=2\pi\be(4\pi\om\mu)^{\si}.
 \end{equation}
\end{thm}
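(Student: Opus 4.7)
The key observation is that on $V_\mu$ the energy $\E$ differs from the action $\S_{\om_\mu}:=\E+\om_\mu\M$ only by the fixed constant $\om_\mu\mu$, so it suffices to prove that $u_{\om_\mu}$ is the unique (up to a phase factor) global minimizer of $\S_{\om_\mu}$ on the whole energy space $V$, not merely on the mass-constraint manifold. This reduces the constrained minimization to an unconstrained one where the structure of \eqref{decomp} can be exploited explicitly.

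First, I would verify that \eqref{eq-ommu} admits a unique solution $\om_\mu\in(0,\wt{\om})$: the left hand side $\log(2/\sqrt{\om})-\ga$ decreases strictly from $+\infty$ to $0$ on $(0,\wt{\om})$, while the right hand side $2\pi\be(4\pi\om\mu)^\si$ increases strictly from $0$ to a positive value there, so they cross exactly once. Combining \eqref{eq-stan3} with \eqref{eq-massom} then shows that $\om_\mu$ is precisely the frequency associated with the standing wave of mass $\mu$, whose existence and uniqueness is granted by Remark \ref{rem-imp}.

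Next, picking $\la=\om_\mu$ in the decomposition \eqref{decomp}, a direct rewriting yields
\[
 \S_{\om_\mu}(\psi)=\|\na\phi_{\om_\mu}\|^2+\om_\mu\|\phi_{\om_\mu}\|^2+\f{\be|q|^{2\si+2}}{\si+1}+\f{\log(\sqrt{\om_\mu}/2)+\ga}{2\pi}\,|q|^2.
\]
The first two terms are non-negative and vanish if and only if $\phi_{\om_\mu}=0$. Setting $q_\mu:=q(\om_\mu)$, equation \eqref{eq-ommu} gives $(\log(\sqrt{\om_\mu}/2)+\ga)/(2\pi)=-\be|q_\mu|^{2\si}$, so the $q$-dependent part reduces to $g(t):=\f{\be t^{\si+1}}{\si+1}-\be|q_\mu|^{2\si}\,t$ with $t=|q|^2$. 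Since $\be>0$, one has $g''(t)=\si\be t^{\si-1}>0$ for all $t>0$, so $g$ is strictly convex on $(0,+\infty)$, and $g'(|q_\mu|^2)=0$ identifies $t=|q_\mu|^2$ as its unique global minimum.

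Combining these two facts, $\S_{\om_\mu}(\psi)\geq g(|q_\mu|^2)=\S_{\om_\mu}(u_{\om_\mu})$ with equality if and only if $\phi_{\om_\mu}=0$ and $|q|=|q_\mu|$, i.e., if and only if $\psi=\exp^{\i\eta}u_{\om_\mu}$ for some $\eta\in\R$. Restricting to $V_\mu$ and subtracting $\om_\mu\mu$ yields $\E(\psi)\geq\E(u_{\om_\mu})$ with the same equality case, and $u_{\om_\mu}\in V_\mu$ by construction, so the infimum is attained. There is no real obstacle; the only care needed is to rule out the degenerate case $q=0$, which is strictly worse than the claimed minimizer: $\S_{\om_\mu}(u_{\om_\mu})=-\f{\si\be|q_\mu|^{2(\si+1)}}{\si+1}<0$ while $\S_{\om_\mu}(\phi_{\om_\mu})\geq 0$ whenever $q=0$.
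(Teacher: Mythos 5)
Your argument is correct, and the essential computation is the same as the paper's: after fixing $\la=\om_\mu$ in \eqref{eq-energy}, the contribution of the regular part is nonnegative and the $|q|$-dependent part is minimized exactly at $|q|=q(\om_\mu)$, which yields the sharp lower bound $\E(u_{\om_\mu})$. The packaging, however, is genuinely different. The paper works directly with the constrained problem: it takes a minimizing sequence in $V_\mu$, extracts a weak limit using coercivity and weak lower semicontinuity, and only then applies the pointwise lower bound $\E(\psi)\geq f(|q|)\geq f(q(\om_\mu))=\E(u_{\om_\mu})$; uniqueness is obtained separately, from the a priori remark that any constrained minimizer must be a standing wave and that the standing wave of mass $\mu$ is unique (Remark \ref{rem-imp}). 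You instead pass to the action $\S_{\om_\mu}=\E+\om_\mu\M$, observe that the problematic term $-\om_\mu\|\psi\|^2$ cancels, and minimize $\S_{\om_\mu}$ over all of $V$ with no compactness argument at all; the strict convexity of $g$ and the strict positivity of $\|\na\phi_{\om_\mu}\|^2+\om_\mu\|\phi_{\om_\mu}\|^2$ for $\phi_{\om_\mu}\neq0$ then deliver both attainment and uniqueness (up to phase) in one stroke. This is arguably cleaner: the paper's weak-convergence step is in fact superfluous, since its own lower bound already applies to every element of $V_\mu$ and closes the argument directly, exactly as your version makes explicit. The one point worth stating a bit more carefully in your write-up is that a global minimizer of $\S_{\om_\mu}$ on $V$ restricted to $V_\mu$ is a global minimizer of $\E$ on $V_\mu$ only because $u_{\om_\mu}$ itself happens to lie on $V_\mu$ (which is precisely what the choice of $\om_\mu$ via \eqref{eq-ommu} and \eqref{eq-massom} guarantees); you do say this, but it is the hinge of the reduction and deserves emphasis.
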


\begin{proof}
 Preliminarily, one can see that \eqref{eq-ommu} is equivalent to $\M(\om)=\mu$ with $\M(\om)$ defined by \eqref{eq-massom}. Hence, by Remark \ref{rem-imp}, there is a unique solution $\om_\mu$ for any value of $\mu>0$. It is thus clear that, if a minimizer does exist, then it has to be equal to $u_{\om_\mu}$ up to phase factor.
 
 First, let us fix $\la=\om_\mu$ in \eqref{eq-energy} and in the definition of the norm of $V_\mu$, which is the same of $V$. Consider, therefore, a minimizing sequence $\{\psi_n\}  =\{\phi_{\om_\mu,n}+q_n\G_{\om_\mu}\}  \subset V_\mu$ for $\E$. As $\|\psi_n\|^2=\mu$ and $\be>0$, $\E$ is coercive on $V_\mu$ and hence $\|\psi_n\|_{\om_\mu}\leqslant C$ for every $n$. As a consequence there exists $\psi=\phi_{\om_\mu}+q\,\G_{\om_\mu}\in V$ such that, up to subsequences, 
 \[
  \begin{array}{ll}
   \psi_n\xrightharpoonup[n\to\infty]{w}\psi, & \text{in }L^2(\R^2),\\[.2cm]
   \phi_{\om_\mu,n}\xrightharpoonup[n\to\infty]{w}\phi_{\om_\mu}, & \text{in }H^1(\R^2),\\[.2cm]
   q_n \xrightarrow[n\to\infty]{} q, & \text{in }\C.
  \end{array}
 \]
 Furthermore, by the weak lower semicontinuity of $\E$
 \[
  \E(\psi) \leq \liminf_{n \to +\infty}\E(\psi_n),
 \]
 and, by the weak lower semicontinuity of the norms, $\|\psi\|^2\leq\mu$. Hence, if one can prove that $\|\psi\|^2=\mu$,  the proof is complete.
 
 To this aim, first note that
 \begin{align*}
  \E(\psi) & \geq -\om_\mu\|\psi\|^2+\bigg(\f{\be|q|^{2\si}}{\si+1}+\f{\log(\sqrt{\om_\mu}/2)+\ga}{2\pi}\bigg)|q|^2\\[.2cm]
  & \geq -\om_\mu\mu+\bigg(\f{\be|q|^{2\si}}{\si+1}+\f{\log(\sqrt{\om_\mu}/2)+\ga}{2\pi}\bigg)|q|^2=:f(|q|).
 \end{align*}
 Assuming that $q$ is real-valued (which is not restrictive), one can check that $f$ is minimized for $q=q(\om_\mu)$ and that
 \[
  f\big(q(\om_\mu)\big)=-\f{q^2(\om_\mu)}{4\pi}-\f{\si\be q^{2\si+2}(\om_\mu)}{\si+1}=\E(u_{\om_\mu}).
 \]
 Therefore,
 \[
  \E(\psi)\geq\E(u_{\om_\mu})
 \]
 and, since $\M(u_{\om_\mu})=\mu$, this implies that $u_{\om_\mu}$ is the minimizer of $\E$ on $V_\mu$ up to a phase factor.
\end{proof}

\end{document}